\numberwithin{equation}{section}
\renewcommand\d{\partial}
\def\eps{\varepsilon }
\renewcommand\d{\partial}
\newcommand\R{\mathbb R}
\def\eps{\varepsilon}
\newcommand\br{\begin{remark}}
\newcommand\er{\end{remark}}
\newcommand\bp{\begin{pmatrix}}
\newcommand\ep{\end{pmatrix}}
\newcommand{\be}{\begin{equation}}
\newcommand{\ee}{\end{equation}}
\newcommand\ba{\begin{equation}\begin{aligned}}
\newcommand\ea{\end{aligned}\end{equation}}
\newcommand{\bap}{\begin{app}}
\newcommand{\eap}{\end{app}}
\newcommand{\begs}{\begin{exams}}
\newcommand{\eegs}{\end{exams}}
\newcommand{\beg}{\begin{example}}
\newcommand{\eeg}{\end{exaplem}}
\newcommand{\bpr}{\begin{proposition}}
\newcommand{\epr}{\end{proposition}}
\newcommand{\bt}{\begin{theorem}}
\newcommand{\et}{\end{theorem}}
\newcommand{\bc}{\begin{corollary}}
\newcommand{\ec}{\end{corollary}}
\newcommand{\bl}{\begin{lemma}}
\newcommand{\el}{\end{lemma}}
\newcommand{\bd}{\begin{definition}}
\newcommand{\ed}{\end{definition}}
\newcommand{\brs}{\begin{remarks}}
\newcommand{\ers}{\end{remarks}}
\newcommand{\N}{\mathcal{N}}
\newcommand{\RR}{{\mathbb R}}
\newcommand{\Id}{{\rm Id }}
\newcommand{\Range}{{\rm Range }}
\newtheorem{theorem}{Theorem}[section]
\newtheorem{proposition}[theorem]{Proposition}
\newtheorem{corollary}[theorem]{Corollary}
\newtheorem{lemma}[theorem]{Lemma}
\theoremstyle{remark}
\newtheorem{remark}[theorem]{Remark}
\theoremstyle{definition}
\newtheorem{definition}[theorem]{Definition}
\newtheorem{example}[theorem]{Example}
\newcommand\cT{{\mathcal T}}
\newcommand\cS{{\mathcal S}}
\newcommand\cR{{\mathcal R}}
\newcommand\cQ{{\mathcal Q}}
\newcommand{\RM}{\mathbb{R}}
\newcommand{\ZM}{\mathbb{Z}}
\newcommand{\beq}{\begin{equation}}
\newcommand{\eeq}{\end{equation}}
\newcommand{\ks}{k_\star}
\title{
Nonlocalized modulation of periodic reaction diffusion waves:
Nonlinear stability
}
\author{ Mathew A. Johnson}
\address{ Indiana University, Bloomington, IN 47405}
\email{matjohn@indiana.edu}
\thanks{ Research of M.J. was partially supported by an NSF Postdoctoral Fellowship under NSF grant DMS-0902192}
\author{Pascal Noble}
\address{Universit\'e Lyon I, Villeurbanne, France}
\email{noble@math.univ-lyon1.fr}
\thanks{Research of P.N. was partially supported by the French ANR Project no.
ANR-09-JCJC-0103-01}
\author{L.Miguel Rodrigues}
\address{Universit\'e Lyon 1, Villeurbanne, France}
\email{ rodrigues@math.univ-lyon1.fr}
\thanks{ Stay of M.R. in Bloomington was supported by 
French ANR Project no. ANR-09-JCJC-0103-01}
\author{Kevin Zumbrun}
\address{Indiana University, Bloomington, IN 47405}
\email{kzumbrun@indiana.edu}
\thanks{Research of K.Z. was partially supported
under NSF grant no. DMS-0300487}
\begin{document}

\begin{abstract}
By a refinement of the technique used by Johnson and Zumbrun
to show stability under localized perturbations,
we show that spectral stability implies nonlinear modulational
stability of periodic traveling-wave solutions of reaction
diffusion systems under small perturbations consisting of a
nonlocalized modulation plus a localized perturbation.
The main new ingredient is a detailed analysis of linear behavior
under modulational data $\bar u'(x)h_0(x)$, where $\bar u$ is
the background profile and $h_0$ is the initial modulation.
\end{abstract}
\date{\today}
\maketitle

\section{Introduction}\label{s:introduction}
Stability and behavior of modulated periodic wave trains
have received considerable recent attention, both in the reaction diffusion
and conservation law contexts; see, for example
\cite{S1,S2,DSSS,JZ1,JZ2,JZN,BJNRZ1,BJNRZ2,NR1,NR2}
and references therein.
The initial mathematical challenge of this problem is that the linearized
equations, being periodic-coefficient, have purely essential spectrum
when considered as problems on the whole line,
so that there is no spectral gap between neutral and other modes,
making difficult either the treatment of linearized behavior or
the passage from linear to nonlinear estimates.

This issue was overcome in the reaction diffusion context in the late 1990s by Schneider \cite{S1,S2},
resolving an at the time 30-year open problem.
Using a method of ``diffusive stability," Schneider combined diffusive-type linear estimates
with renormalization techniques to show that, assuming ``diffusive" spectral stability in the sense to be specified
later of the background periodic wave, long-time behavior under localized perturbations is essentially described
by a scalar heat equation in the phase variable, with the amplitude decaying more rapidly.

More recently, Oh--Zumbrun and Johnson--Zumbrun \cite{OZ1,JZ2} in 2010
using rather different tools coming from viscous shock theory
have resolved the corresponding problem in the conservation
law case, showing that, again assuming diffusive spectral stability of
the background periodic wave, behavior under 
localized perturbations
is described, roughly, by a system of viscous conservation laws in
the derivative of the phase and other modulation parameters,
so that the phase decays at slower, errorfunction rate than
in the reaction diffusion case.
This description
has since been sharpened in \cite{JNRZ2},
showing that behavior consists of a modulation governed by the
associated
formal second-order Whitham averaged system plus a faster-decaying term.
Further developments
include the treatment of nonlinear stability of roll waves
of the St. Venant equations \cite{JZN,BJNRZ1}
and the resolution in \cite{BJNRZ2}
of the 35-year open problem of proving
nonlinear stability of
spectrally stable periodic Kuramoto--Sivashinsky waves.

Applied to the reaction diffusion context \cite{JZ1},
the approach of \cite{JZ2}
recovers and slightly extends the results of Schneider, yielding
the same heat kernel rate of decay
with respect to localized perturbations while
removing the assumption of \cite{S1,S2} that nearby periodic
waves have constant speed.
However, one might hope to do better, based on the successful
analysis in \cite{JZ2}, etc., in the conservation law case, of perturbations
decaying at slower, errorfunction rate.

As described
clearly
in \cite{DSSS},
formal (WKB) asymptotics suggest the same thought, yielding
the description to lowest order of a scalar viscous conservation law
\be\label{whit}
k_t-\omega(k)_x= (d(k)k_x)_x
\ee
for the wave number $k=\psi_x$, where frequency 
$\omega=\psi_t$
and $\psi$ is the phase. Here,
$\omega(k)=-c(k)k$
and wave speed $c(k)$ are determined through
the nonlinear dispersion
relation obtained from the manifold of periodic traveling-wave solutions
$\bar u^k(k(x-ct))$,
and $d(k)$ through higher-order asymptotics.
See \cite{Se,OZ2,NR1,NR2} for corresponding developments
in the conservation law case.
From \eqref{whit} and the definition $k=\psi_x$, one might hope
to treat initial modulations $\psi{|}_{t=0}=h_0$
for which, not the phase $h_0$, but its derivative $\partial_x h_0$ is
localized ($L^1$), obtaining the same heat kernel rate of decay for $k$
as found in the conservation law case \cite{JZ2}.

In this note, we show that this is indeed the case,
proving by an  adaptation of the methods of \cite{JZ1,JZ2} that
{\it diffusively spectrally stable periodic reaction diffusion waves
are nonlinearly stable under initial perturbations consisting of a localized
perturbation plus a nonlocalized modulation $h_0$, $\partial_x h_0\in L^1$,
in phase}, with heat kernel rate of decay in the wave number and
errorfunction decay in the phase.
In a companion paper \cite{JNRZ1},
we build on these basic estimates to establish, further,
time-asymptotic behavior,
validating description \eqref{whit} in the long-time limit.
Recall that \eqref{whit} has been validated in \cite{DSSS} in the  related,
asymptotically-large-bounded-time small-wavenumber limit and  for data in uniformly-local
Sobolev spaces\footnote{That is, following the notation of \cite{DSSS}, $k=\psi_x\in  H^s_{ul}$ with small norm $\|w\|_{H^s_{ul}}:=\sup_{x\in\RM}\|w\|_{H^s([x,x+1])}$ and $s\geq M+2\geq 4$.}, in the somewhat different sense of showing
that there  exists a $\delta$-family of solutions of the full system  $\delta^M$-close to
a formal expansion\footnote{But this expansion is only $\delta^2$-close to the approximate solution involving only the solution of the second order Whitham equation.} in $\delta$ on intervals
$[0,T/\delta^2]$, where $M$ and $T$ are
arbitrarily large and $\delta$ is the  characteristic size of the wavenumber of the modulation:
that is,  they build ``prepared'' data for bounded intervals rather than work with  general
ones globally in time.

\medskip
Consider a periodic traveling-wave solution of reaction diffusion equation
$u_t=u_{xx} + f(u)$, or, equivalently, a standing-wave solution
$u(x,t)=\bar u(x)$ of
\be\label{rd}
\ks {u}_t=\ks^2{u}_{xx}+f({u})+\ks c{u}_x,
\ee
where $c$ is the speed of the original traveling wave,
and wave number $k_*$ is chosen so that
\be\label{per}
\bar u(x+1)=\bar u(x).
\ee
For simplicity of notation, we will follow this convention throughout
the paper; that is, all periodic functions are assumed to be
periodic of period one.

We make the following standard genericity assumptions:
\begin{enumerate}
  \item[(H1)] $f\in C^K(\RM)$,
($K\ge3$).
  \item[(H2)]  Up to translation, the set of $1$-periodic solutions of \eqref{rd} (with $k$ replacing $\ks$) in the vicinity of $\bar{u}$, $k=\ks$,
forms a smooth $1$-dimensional manifold
$\{\bar{u}(k,\cdot)\}=\{\bar{u}^k(\cdot)\}$, $c=c(k)$.
\end{enumerate}

Linearizing \eqref{rd} about $\bar u$ yields the periodic coefficient
equation
\be\label{lin}
\ks v_t=\ks Lv:= (\ks^2\partial_x^2 +\ks c\partial_x +b)v,
\qquad
b(x):=df(\bar u(x)).
\ee
Introducting the family of Floquet operators
\be\label{Lxi}
\ks L_\xi:= e^{-i\xi x}\ks L e^{i\xi x}=
\ks^2(\partial_x +i\xi)^2 + \ks c(\partial_x+i\xi) + b,
\ee
operating on periodic functions on $[0,1]$,
determined by the defining relation
\be\label{defrel}
L (e^{i\xi x}f)= e^{i\xi x} (L_\xi f)
\; \hbox{\rm  for $f$ periodic},
\ee
we define following \cite{S1,S2}
the standard {\it diffusive spectral stability} conditions:
\begin{enumerate}
  \item[(D1)]
$\lambda=0$ is a simple eigenvalue of $L_0$.
(Note that $\xi=0$ corresponds to co-periodic perturbations,
hence $L_0$ has always at least the translational, zero-eigenfunction $\bar u'$.)
  \item[(D2)]
$\sigma(L_{\xi})\subset\{\lambda\ |\ \Re \lambda\leq-\theta|\xi|^2\}$
for some constant $\theta>0$.
\end{enumerate}

Then, we have the following Main Theorem, extending
the results of \cite{JZ1} to nonlocalized perturbations.

\begin{theorem}\label{main}
Let $K\ge 3$.
Assuming (H1)-(H2) and (D1)-(D2),
let
$$
E_0:=\|\tilde u_0(\cdot-h_0(\cdot))-\bar u(\cdot)\|_{L^1(\RM)\cap H^K(\RM)}
+\|\partial_x h_0\|_{L^1(\RM)\cap H^K(\RM)}
$$
be sufficiently small, for some choice of modulation $h_0$.
Then, there exists a global solution $\tilde u(x,t)$ of \eqref{rd}
with initial data $\tilde u_0$ and a phase function
$\psi(x,t)$ such that, for $t>0$ and $2\le p \le \infty$,
\ba\label{mainest}
\|\tilde u(\cdot-\psi(\cdot,t), t)-\bar u(\cdot)\|_{L^p(\RM)}
&\lesssim E_0 (1+t)^{-\frac{1}{2}(1-1/p)}\\
\|\nabla_{x,t} \psi(\cdot,t) \|_{L^p(\RM)}
&\lesssim E_0 (1+t)^{-\frac{1}{2}(1-1/p)},\\
\ea
and
\ba\label{andpsi}
\|\tilde u(\cdot , t)-\bar u(\cdot)\|_{L^\infty(\RM)}, \quad
\| \psi(\cdot,t) \|_{L^\infty(\RM)} &\lesssim E_0.
\ea
In particular, $\bar u$ is nonlinearly
(boundedly) stable in $L^\infty(\R)$
with respect to initial perturbations
$v_0=\tilde u_0-\bar u$
for which $\|v_0\|_{E}:=\inf_{\partial_x h_0\in L^1(\RM) \cap H^K(\RM)} E_0$ is sufficiently
small.
\end{theorem}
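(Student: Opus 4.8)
The plan is to run the localized-perturbation scheme of \cite{JZ1,JZ2}, the essential refinement being a direct treatment at the linear level of the modulational datum $\bar u'(x)h_0(x)$ when only $\partial_x h_0\in L^1$. After translating $\bar u$ within the family of (H2) we may assume $\|h_0\|_{L^\infty}\lesssim\|\partial_x h_0\|_{L^1}\lesssim E_0$, so $h_0$ is itself small. First I would introduce a modulated perturbation variable $v$ and an undetermined phase $\psi$ by
\[
v(x,t):=\tilde u(x-\psi(x,t),t)-\bar u(x),
\]
so that $\|v(\cdot,t)\|_{L^p}$ is precisely the first quantity in \eqref{mainest}. Substituting into \eqref{rd} produces a quasilinear equation $\ks v_t-\ks Lv=\mathcal N$ with $\mathcal N=\partial_x\mathcal R+\partial_t\mathcal S+\mathcal Q$, where $\mathcal R,\mathcal S,\mathcal Q$ are finite sums of terms at least quadratic in $(v,\nabla_{x,t}\psi)$ and their first derivatives, together with linear terms of the schematic form $(\text{derivative of }\psi)\times\bar u^{(j)}$ that will be absorbed into the definition of $\psi$. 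Taylor-expanding the initial datum around the $h_0$-shifted profile, and using $\|\partial_x h_0\|_\infty$ small to invert $x\mapsto x-h_0(x)$, one finds
\[
v(\cdot,0)=v_*+\bar u'(\cdot)\,h_0(\cdot)+\mathcal O\!\left(h_0^2,\ h_0\,\partial_x h_0,\ \ldots\right),\qquad \|v_*\|_{L^1\cap H^K}+\|\partial_x h_0\|_{L^1\cap H^K}\lesssim E_0,
\]
with $v_*\in L^1(\RM)\cap H^K(\RM)$. Thus, up to a genuinely localized datum and higher-order (hence small) corrections, $v$ starts from $\bar u'h_0$, and controlling $e^{Lt}(\bar u'h_0)$ is the heart of the matter.

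For the linear analysis I would use the Bloch representation $e^{Lt}g=\int_{-\pi}^{\pi}e^{i\xi x}\,e^{L_\xi t}\check g(\xi,\cdot)\,d\xi$ together with standard spectral perturbation theory near $\xi=0$: by (D1)--(D2) the Floquet family has, for $|\xi|$ small, a single smooth eigenvalue $\lambda(\xi)=-i\alpha\xi-\beta\xi^2+O(|\xi|^3)$ with $\beta>0$, right eigenfunction $\phi_\xi$ normalized so that $\phi_0=\bar u'$, and dual eigenfunction $\tilde\phi_\xi$, while the rest of the spectrum lies in $\{\Re\lambda\le-\theta\}$. This produces a decomposition
\[
e^{Lt}\,g(x)=\bar u'(x)\!\int\mathfrak e(x,t;y)\,g(y)\,dy+\big(\widetilde S(t)g\big)(x),
\]
in which the scalar ``phase kernel'' $\mathfrak e$ behaves like a convected Gaussian — its $x$- and $y$-derivatives gaining the usual $t^{-1/2}$ per derivative — and $\widetilde S(t)$ obeys the full linearized decay estimates of \cite{JZ1}. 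Applied to $g=\bar u'h_0$, the transparent point is that the pairing against the Bloch symbol of $\bar u'h_0$ reduces at low frequency to $\hat h_0(\xi)$, so the $\bar u'$-component of $e^{Lt}(\bar u'h_0)$ is, to leading order, $\bar u'(x)\big(e^{t\mathcal P(\partial_x)}h_0\big)(x)$ for a second-order, heat-type operator $\mathcal P$: since $h_0\in L^\infty$ this component is uniformly bounded, while its $x$-derivative equals $e^{t\mathcal P(\partial_x)}\partial_x h_0$ and therefore decays at heat-kernel rate from $\partial_x h_0\in L^1$ — exactly the errorfunction-in-phase/heat-in-wavenumber dichotomy of \eqref{mainest}--\eqref{andpsi}. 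The remaining contributions of $\bar u'h_0$ — from $\widetilde S(t)(\bar u'h_0)$ and from the $O(|\xi|)$ corrections to $\phi_\xi$ and to the symbol — I would reduce to the localized case by integrating by parts once in $y$, using $\bar u'=\partial_y\bar u$ to transfer a derivative onto $h_0$ and thereby produce kernels acting on $\partial_x h_0\in L^1$ with temporally integrable bounds.

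With the linear estimates in hand I would close the argument by the usual Duhamel/fixed-point bootstrap. Define $\psi$ self-consistently as (minus) the $\bar u'$-projection of the Duhamel right-hand side for $v$,
\[
\psi(x,t)=-\!\int\mathfrak e(x,t;y)\,v(y,0)\,dy-\int_0^t\!\!\int\mathfrak e(x,t-s;y)\,\mathcal N(y,s)\,dy\,ds,
\]
so that $v=\widetilde S(t)v(\cdot,0)+\int_0^t\widetilde S(t-s)\mathcal N(s)\,ds$ genuinely decays, while $\psi$ evolves (from an effective initial datum comparable to $-h_0$) under a scalar viscous Whitham-type dynamics. On the solution space encoded by the norms in \eqref{mainest}--\eqref{andpsi} I would verify that the Duhamel update of $(v,\psi)$ is a contraction, using: the $L^q\to L^p$ ($1\le q\le p\le\infty$) Gaussian bounds for $\widetilde S(t)$ and for the derivatives of $\mathfrak e$; the at-least-quadratic structure of $\mathcal R,\mathcal S,\mathcal Q$, so that the Duhamel integrals of products such as $v\,\nabla_{x,t}\psi$ and $\nabla_{x,t}\psi\,\nabla_{x,t}\psi$ converge with the asserted rates; the Sobolev embedding $H^K\hookrightarrow W^{1,\infty}$ (using $K\ge3$) to handle the $L^\infty$ factors in $\mathcal N$; and — since the second-order equation loses a derivative relative to $\mathcal N$ while the semigroup bounds do not decay at top order — a Kawashima/Goodman-type nonlinear-damping energy estimate,
\[
\|(v,\nabla_{x,t}\psi)(t)\|_{H^K}^2\lesssim e^{-\theta t}\|(v,\nabla_{x,t}\psi)(0)\|_{H^K}^2+\int_0^t e^{-\theta(t-s)}\|(v,\nabla_{x,t}\psi)(s)\|_{L^2}^2\,ds,
\]
to upgrade the $L^2$ decay to $H^K$. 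A standard continuation argument combining local well-posedness of \eqref{rd} with these a priori bounds then yields the global solution, estimates \eqref{mainest}, and the bounded (non-decaying) bounds \eqref{andpsi}, the latter following from the $L^\infty$ boundedness of $e^{t\mathcal P}h_0$ and from $\|\bar u(\cdot-\psi)-\bar u(\cdot)\|_{L^\infty}\lesssim\|\psi\|_{L^\infty}$.

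The step I expect to be the main obstacle is the linear one together with its feedback into the iteration: one must organize the decomposition so that the merely bounded quantities $\psi$ and $e^{t\mathcal P}h_0$ never enter $\mathcal N$ except through their decaying derivatives, and must keep the $y$-integration-by-parts consistent with the $x$-periodic, genuinely non-convolution structure of the Bloch kernels $\mathfrak e$ and $\widetilde S$. Verifying these cancellations and the attendant bookkeeping — rather than any single estimate — is where the work lies, and is precisely the refinement of \cite{JZ1} announced in the abstract.
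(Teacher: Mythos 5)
Your overall architecture coincides with the paper's: the modulated unknown $v(x,t)=\tilde u(x-\psi(x,t),t)-\bar u(x)$, the Bloch decomposition of $e^{Lt}$ into a phase part $\bar u'\,s^p(t)$ plus a diffusively/exponentially decaying $\tilde S(t)$, the key observation that derivatives of the phase part applied to $\bar u'h_0$ act on $\partial_x h_0$, a nonlinear damping estimate, and a continuous-induction bootstrap. The genuine gap is in your reduction of the initial data. You Taylor-expand to write $v(\cdot,0)=v_*+\bar u'h_0+O(h_0^2,\,h_0\partial_xh_0,\dots)$ and treat the remainder as a small correction. But its smallness is only in $L^\infty$: since only $\partial_xh_0\in L^1\cap H^K$ is assumed, $h_0$ may tend to two different constants at $\pm\infty$, and even after the normalizing translation you invoke, $h_0$ is not localized; hence quadratic remainder terms such as $\bar u''h_0^2$ are neither in $L^1$ nor exactly of the modulational form $\bar u'\times(\text{function with }L^1\text{ derivative})$. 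The only linear bounds available (Propositions \ref{greenbds} and \ref{modprop}) cover localized data and data exactly of the form $h_0\bar u'$; for a periodic factor $f\neq\bar u'$ multiplying a nondecaying function, the cancellation $\tilde\Pi(\xi)\bar u'=O(\xi)$ exploited in \eqref{tildeShf} fails and leaves a nonintegrable $1/\xi$ singularity in the $\tilde S$ contribution—precisely the extension flagged as open in Remark \ref{remarks}.2. So the Taylor remainder can be absorbed neither into $v_*$ nor into the iteration at the claimed rates. The paper is built to avoid producing such remainders at all (Remark \ref{trick}): prescribing $\psi|_{t=0}=h_0$ exactly and taking $d_0=\tilde u_0(\cdot-h_0(\cdot))-\bar u$, the modulation enters only linearly, through the datum $d_0+h_0\bar u'$ for the combined variable $v+\psi\bar u'$, with no expansion of $\bar u(\cdot+h_0)$ ever performed.

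A second, related point is left unresolved: compatibility at $t=0$. You define $\psi$ as the phase projection of the Duhamel right-hand side, so $\psi(\cdot,0)$ is forced by $v(\cdot,0)$ and will not in general agree with the shift implicitly used in specifying $v(\cdot,0)$ (nor equal $h_0$ exactly); without reconciling the two, the function $\tilde u$ reconstructed from $(v,\psi)$ does not have the prescribed initial data $\tilde u_0$. The paper handles this initial-layer incompatibility explicitly, via the cutoff $\chi(t)$ in the definition \eqref{psidef} together with the short-time estimate \eqref{spdiff} on $(S^p(t)-\Id)(h_0\bar u')$ and $s^p(t)(h_0\bar u')-h_0$ (see Remark \ref{remarks2}); your phrase that $\psi$ starts ``from an effective initial datum comparable to $-h_0$'' is exactly where an analogous device is needed but missing. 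Minor further remarks: estimating through the kernel $\mathfrak{e}(x,t;y)$ is legitimate (it is the route of \cite{JZ1}), whereas the paper stays within the Bloch formulation precisely to keep the frequency-by-frequency cancellations transparent; and your normalization $\|h_0\|_{L^\infty}\lesssim\|\partial_xh_0\|_{L^1}$ fixes only one asymptotic value of $h_0$—it does not localize $h_0$, which is why the expansion step cannot be repaired by smallness alone.
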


\br\label{variables}
\textup{
It may seem more natural, and indeed is so, to introduce $h_0$ and $\psi$ via
$$
v(x,0)=\tilde u_0(x)-\bar u(x+h_0(x)),\quad v(x,t)=\tilde u(x,t)-\bar u(x+\psi(x,t)).
$$
However, in doing so one introduces in the equation for $v$ terms involving only $\psi$ and thus not decaying in time; see Lemma \ref{lem:canest} below.
For this reason we work instead with
$$
v(x,t)=\tilde u(x-\psi(x,t),t)-\bar u(x),
$$
that is,
$$
\tilde u(x,t)=\bar u(Y(x,t))+v(Y(x,t),t)
$$
where $Y$ is such that
$$
Y(x,t)-\psi(Y(x,t),t)=x,\quad Y(y-\psi(y,t),t)=y \ .
$$
Notice that we insure the existence of such a map $Y$ by keeping, for any $t$, $\|\psi(\cdot,t)\|_{L^\infty(\RM)}$ bounded and $\|\psi_x(\cdot,t)\|_{L^\infty(\RM)}$ small.
It should be stressed, however,
that\footnote{This follows from $Y(X(y,t),t)-[X(y,t)+\psi(X(y,t),t)]=\psi(y,t)-\psi(y-\psi(y,t),t)$.}
$$
Y(x,t)=x+\psi(x,t)+O(\|\psi(\cdot,t)\|_{L^\infty}\|\psi_x(\cdot,t)\|_{L^\infty})
$$
so that we are not so far
from the natural (but inappropriate) approach.  Furthermore, notice also that introducing the map $Y$ above enables one to go back
to the original unknown $\tilde{u}(x,t)$ when desired.
}
\er

At a philosophical level, the main new observation here
beyond what was shown in \cite{JZ1} is that linear perturbations that 
are ``asymptotically modulational''\footnote{In the sense that they converge at plus and minus spatial infinity to 
scalar multiples $h_0 \bar u'$ of the derivative $\bar u'$ of the background traveling wave, corresponding at linear level to shifts $\bar u(x+h_0(x))$.}
 behave as regards the linear solution operator $S(t)$ as if they
were asymptotically constant; in particular, $\partial_x S(t)(\bar u'h_0)\sim S(t)(\bar u' \partial_x h_0)$,
where $\bar u$ is the background traveling wave 
(see Section \ref{s:mod}).
The analysis is based, at a technical level,
on the decomposition of Floquet modes of $\bar u' h_0$
into the sum of Fourier modes of $h_0$ times the periodic function $\bar u'(x)$,
and the separate analysis/recombination of each frequency range
$[-\pi+2j\pi, \pi+2j\pi]$.
We are unable to see such detail from the
Green function description of \cite{JZ1,JZ2,JZN}, nor from weighted
energy estimates as in \cite{S1,S2}.
Indeed, this seems to be an instance where frequency domain techniques
detect cancellation not readily apparent by spatial-domain techniques.

In the process, we obtain an alternative proof of the basic estimates
in \cite{JZ1}, carried out entirely within the Bloch transform formulation
without passing to a Green function description as in \cite{JZ1}.
This elucidates somewhat the relation between the Bloch tranform
based estimates of \cite{S1,S2} and the Green function based
estimates of \cite{JZ1,JZ2}, in particular the role
of the Green function (integral kernel)
in obtaining the estimates of \cite{JZ1}; see Remark \ref{BG}.

Our analysis suggests that one might by an elaboration of the
approach used here treat still more general perturbations
converging asymptotically to {\it any} fixed periodic perturbations
at $\pm \infty$, not necessarily modulations, which would
then time-exponentially relax to modulations following the
dynamics of the corresponding periodic problem;
see Remark \ref{remarks}.2.
This would be an interesting direction for further investigation.

We mention, finally, that the techniques introduced here
are not limited to the reaction-diffusion case, but apply equally
to the conservative case treated in \cite{JZ2}, yielding a comparable
result of stability with respect to nonlocalized 
modulations \cite{JNRZ3}.
Indeed, this clarifies somewhat the relation between the reaction diffusion
and conservation law case, revealing a continuum of models with
common behavior lying between these two extremes.
These issues will be reported on elsewhere.

\br\label{c0}
\textup{
Without change, we may treat reaction-diffusion systems with a 
diagonal diffusion as considered in \cite{DSSS}.
More generally, all
proofs go through whenever the linearized operator generates
a $C^0$ semigroup;
in particular, our results apply to the
(sectorial) Swift--Hohenberg equations treated for localized
perturbations in \cite{S2}.\footnote{ See \cite{BJNRZ1,BJNRZ2,BJNRZ3,JZN} for related analyses in such general settings.}
It would be interesting to extend to the multi-dimensional
case as in \cite{U}.
}
\er

\medskip

{\bf Note:}
We have been informed by B\"jorn Sandstede that
similar results have been obtained by 
different means
by him and collaborators \cite{SSSU}
using a nonlinear decomposition of phase and amplitude variables as
in \cite{DSSS}.

\section{Preliminaries}\label{s:prelim}

Recall the Bloch solution formula for periodic-coefficient
operators,
\be\label{fullS}
(S(t) g)(x):=(e^{tL}g)(x) = \int_{-\pi}^{\pi} e^{i\xi x} (e^{tL_\xi}  \check g(\xi, \cdot))(x) d\xi,\footnote{
In other words,
$\check{(e^{tL}g)}(\xi,x)= (e^{tL_\xi} \check g(\xi,\cdot))(x)$,
 a consequence of \eqref{defrel}.}
\ee
where
$L_\xi$ is as in \eqref{Lxi},
\be\label{checkg}
\check g(\xi,x):= \sum_{j\in \ZM} \hat g(\xi+2j\pi) e^{i2\pi jx},
\ee
periodic, denotes the Bloch transform of $g$,
$\hat g(\xi):=\frac{1}{2\pi} \int_\RM e^{-i\xi x} g(x) dx$
the Fourier transform, and
\be\label{Brep}
g(x)= \int_{-\pi}^{\pi} e^{i\xi x}  \check g(\xi,x) d\xi
\ee
the inverse Bloch transform, or Bloch representation of $g\in L^2$.

Note that, in view of the inverse Bloch transform formula \eqref{Brep}, the generalized Hausdorff--Young inequality
$\|u\|_{L^p(\RR)} \le \| \check u\|_{L^q(\xi,L^p([0,1]))}$
for $q\le 2\le p $ and $\frac{1}{p}+\frac{1}{q}=1$ \cite{JZ1}, yields for any $1$-periodic
functions $f(\xi,\cdot)$, $\xi\in[-\pi,\pi]$,
\be\label{hy}
\|\int_{-\pi}^{\pi}e^{i\xi \cdot} f(\xi,\cdot)d\xi\|_{L^p(\RR)}
\le \|f\|_{L^q(\xi,L^p([0,1]))}
\:\; {\rm for} \:\;
q\le 2\le p \:\; {\rm and } \:\; \frac{1}{p}+\frac{1}{q}=1.\footnote{
Here, and elsewhere, we are adopting the notation $\|f\|_{L^q(\xi,L^p([0,1]))}:=\left(\int_{-\pi}^\pi\|f(\xi,\cdot)\|_{L^p([0,1])}^{q}d\xi\right)^{1/q}$.}
\ee
This convenient formulation is the one by which we will obtain
all of our linear estimates.

Note by (D1) that there exists a simple zero eigenfunction $\bar u'$ of $L_0$,
which by standard perturbation results \cite{K}
thus bifurcates to an eigenfunction $\phi(\xi,\cdot)$, with associated
left eigenfunction $\tilde \phi(\xi, \cdot)$ and eigenvalue
\be\label{lambda}
\lambda(\xi)= ai \xi - d \xi^2 + O(|\xi|^3),
\ee
where $a$ and $d$ are real and $ d>0$ by assumption (D2) and
complex symmetry, $\lambda(\xi)=\bar \lambda(-\xi)$,
each of $\phi$, $\tilde \phi$, $\lambda$ analytic in $\xi$ and defined
for $|\xi|$ sufficiently small.

\section{Basic linear estimates}\label{s:basic}

Loosely following \cite{JZ1} decompose the solution operator as
\be\label{decomp}
S(t)=S^p(t)+ \tilde S(t),
\qquad
S^p(t)=\bar u'\ s^p(t),
\ee
with
\ba\label{sp}
(s^p(t)g)(x)&=\int_{-\pi}^{\pi}
e^{i\xi x}\alpha(\xi) e^{\lambda(\xi)t} \langle \tilde \phi(\xi,\cdot), \check g(\xi,\cdot)\rangle_{L^2([0,1])} d\xi,
\ea
and
\ba\label{tildeS}
(\tilde S(t) g)(x)&:=
\int_{-\pi}^{\pi} e^{i\xi x} (1-\alpha(\xi))
(e^{L_\xi t}  \check g(\xi))(x) d\xi
+ \int_{-\pi}^{\pi} e^{i\xi x} \alpha(\xi)
(e^{L_\xi t} \tilde \Pi(\xi) \check g(\xi))(x) d\xi\\
&
+ \int_{-\pi}^{\pi}
e^{i\xi x}\alpha(\xi) e^{\lambda(\xi) t}(\phi(\xi,x)-\phi(0,x))
\langle \tilde \phi(\xi),\check g(\xi) \rangle_{L^2([0,1])} d\xi
,
\ea
where $\alpha$ is a smooth cutoff function supported on $\xi$ sufficiently
small,\footnote{That is, $\alpha=0$ for
$|\xi|\ge 2\xi_0$ and $\alpha=1$ for $|\xi|\le \xi_0$,
$\xi_0$ sufficiently small.}
 \be\label{Pi}
\Pi^p(\xi):= \phi(\xi)\langle \tilde \phi(\xi), \cdot\rangle_{L^2([0,1])}
\ee
defined for sufficiently small $\xi$
denotes the eigenprojection onto the eigenspace
$\Range \{\phi(\xi)\}$ bifurcating from
$\Range\{\bar u'(x)\}$ at $\xi=0$, $\tilde \phi$ the associated left
eigenfunction, and $\tilde \Pi:=\Id-\Pi^p$.

We begin by reproving the following estimates established 
(in a slightly different form) in \cite{JZ1},
describing linear behavior under a localized perturbation $g\in L^1(\RM)\cap L^2(\RM)$.

\bpr[\cite{JZ1}]\label{greenbds}
Under assumptions (H1)-(H2) and (D1)-(D2),
for all $t>0$, $2\leq p\leq \infty$,
\begin{align}\label{finale}
\left\|
\partial_x^l\partial_t^m s^p(t) \partial_x^n g \right\|_{L^p(\RM)}
\lesssim
\min \begin{cases}
(1+t)^{-\frac{1}{2}(1-1/p)-\frac{l+m}{2}}\|g\|_{L^1(\RM)},\\
(1+t)^{-\frac{1}{2}(1/2-1/p)-\frac{l+m}{2}}
\|g\|_{L^2(\RM)},
\end{cases}
\end{align}
for
$0\leq n
\leq K+1$,
and for some $\eta>0$
and $0\leq l+2m,n \leq K+1$,
\begin{align}\label{finalg}
\left\|\partial_x^l \partial_t^m \tilde S(t) \partial_x^n g
 \right\|_{L^p(\RM)}
&\lesssim
\min
\begin{cases}
(1+t)^{-\frac{1}{2}(1-1/p)-\frac{1}{2}}\| g\|_{L^1(\RM)\cap H^{l+2m+1}(\RM)},\\
e^{-\eta t}\|\partial_x^n g\|_{H^{l+2m+1}(\RM)}+(1+t)^{-\frac{1}{2}(1/2-1/p)-\frac{1}{2}}\|g\|_{L^2(\RM)},\\
\end{cases}
\end{align}
\epr

Estimates \eqref{finale}--\eqref{finalg} were established
in \cite{JZ1} by first passing to a Green function formulation.
Here, we give an alternative proof within the Bloch formulation
that will be useful for what follows.

\begin{proof}
{\it (i) (Proof of \eqref{finale}(1)).}
First, expand
\ba\label{0Shf}
(s^p(t) g)(x)&=
\int_{-\pi}^{\pi} \alpha(\xi) e^{\lambda(\xi)t}e^{i\xi x}
\langle \tilde \phi  , \check g\rangle_{L^2([0,1])}(\xi) d\xi\\
&=
\sum_{j\in \ZM}
\int_{-\pi}^{\pi}\alpha(\xi) e^{\lambda(\xi)t}e^{i\xi x}
\langle \tilde \phi(\xi,y),e^{i 2\pi j y}
\rangle_{L^2([0,1])}\hat g(\xi+2j\pi) d\xi\\
&=
\sum_{j\in \ZM}
\int_{-\pi}^{\pi} \alpha(\xi) e^{\lambda(\xi)t}e^{i\xi x}
\hat{\tilde \phi  }_{j}(\xi)^*
\hat g(\xi+2j\pi) d\xi,
\ea
where
$\hat{\tilde \phi }_{j}(\xi)$ denotes the $j$th Fourier coefficient in the
Fourier expansion of periodic function $\tilde \phi(\xi,\cdot)$, and $z^*=\bar z$ denotes complex conjugate.

By Hausdorff-Young's inequality,
$|\hat g|\le \|g\|_{L^1(\RM)}$, and by (D2), $|e^{\lambda(\xi)t}\alpha^{1/2}(\xi)|
\le e^{-\eta \xi^2 t}$, $\eta>0$, while by
by Cauchy--Schwarz' inequality,
\[
\alpha^{1/2}(\xi)\sum_j |\hat{\tilde  \phi}_j(\xi)|\le
\alpha^{1/2}(\xi)\sqrt{\sum_j (1+|j|^2)|\hat {\tilde \phi}_j(\xi)|^2
\sum_j (1+|j|^{-2})}
\le
C\alpha^{1/2}(\xi) \|\tilde \phi(\xi)\|_{H^1(\RM;dx)}.
\]
Combining these facts, and applying
\eqref{hy},
we obtain for $1/q+1/p=1$
$$
\|s^p(t)g\|_{L^p(\RM;dx)}\lesssim
\|e^{-\eta \xi^2 t}\|_{L^q([-\pi,\pi],d\xi)}
\sup_{|\xi|\le \eps}\|\tilde \phi\|_{H^1([0,1];dx)}\|g\|_{L^1(\RR)}
\lesssim
(1+t)^{-\frac{1}{2}(1-1/p)}\|g\|_{L^1(\RR)},
$$
yielding the result for $l=m=n=0$.
Estimates for general $l,m,n \ge 0$ go similarly, passing $\partial_x^n$
derivatives onto $\tilde \phi$ in the inner product using integration
by parts and noting that $\partial_x^l$ and $\partial_t^m$ derivatives
bring down harmless bounded factors $(i\xi)^l$ and $\lambda(\xi)^m$.

\medskip
{\it (ii) (Proof of \eqref{finale}(2)).}
The second estimate on $s^p$ follows similarly, but substituting
for the estimate of term
$\int_{-\pi}^{\pi}
e^{i\xi x}\alpha(\xi) e^{\lambda(\xi) t}
 \langle \tilde \phi(\xi),\check g(\xi) \rangle_{L^2([0,1])}d\xi$
the slightly simpler estimate
$$
\begin{aligned}
\|\int_{-\pi}^{\pi}
e^{i\xi x}\alpha(\xi) e^{\lambda(\xi) t}
\langle \tilde \phi,\check g \rangle_{L^2([0,1])}d\xi\|_{L^p(\RM;dx)}
&\lesssim
\|\alpha(\xi) e^{- \eta \xi^2 t} |\langle \tilde \phi(\xi),\check g(\xi) \rangle_{L^2([0,1])}|
\|_{L^q(\xi,L^p([0,1]))}\\
&\lesssim
\|  e^{-\eta \xi^2 t} \|\check g(\xi)\|_{L^2([0,1];dx)} \|_{L^q([-\pi,\pi];d\xi)}\\
&\le
\|  e^{-\eta \xi^2 t}\|_{L^{rq}([-\pi,\pi];d\xi)}
\|g \|_{L^2(\RM;dx)}\\
&\lesssim
(1+t)^{-\frac{1}{2}(1/2-1/p)}\|g\|_{L^2(\RM)},
\end{aligned}
$$
where $1/r + 1/s=1$ and $qs=2$, so that $rq=\frac{2}{2-q}$ is
$\infty$ for $p=q=2$ and $2$ for $q=1$, $p=\infty$.
This verifies the result for $l=m=n=0$; estimates for general
$l,m,n\ge 0$ go similarly.

\medskip
{\it (ii) (Proof of \eqref{finalg}(1)).}
By (D2) and Pr\"uss' Theorem \cite{Pr},
we have
$$
|e^{L_\xi t}(1-\alpha(\xi))|_{H^{l+1}([0,1];dx)\to H^{l+1}([0,1];dx)},\;\;
|\alpha(\xi) e^{L_\xi t}\tilde \Pi|_{H^{l+1}([0,1];dx)\to H^{l+1}([0,1];dx)}
\lesssim e^{-\eta t},
\quad \eta>0,
$$
whence, by Sobelev embedding,
\[
|\partial_x^l e^{L_\xi t}(1-\alpha(\xi))|_{H^{l+1}([0,1];dx)\to L^p([0,1];dx)},\;\;
|\partial_x^j \alpha(\xi) e^{L_\xi t}\tilde \Pi|_{H^{l+1}([0,1];dx)\to L^p([0,1];dx)}
\lesssim e^{-\eta t}
\]
for $2\le p\le \infty$;
see \cite{JZ1} for details.\footnote{
Here, we are using the fact (see for example the Evans function
analysis of \cite{G}) that $H^{l+1}(\RM)$ and $L^2(\RM)$ spectra
coincide.}
The $W^{l,p}(\RM)$ norms of the first two terms of \eqref{tildeS}, by \eqref{hy} and
Parseval's identity, $\|\check g\|_{L^2(\xi,H^{l+1}([0,1]))}
\sim \|g\|_{H^{l+1}(\RM)}$,\footnote{
More precisely,
$$
\|g\|_{H^{l+1}(\RM)}^2=\|\check g\|_{L^2(\xi,L^2([0,1]))}^2+\|\check g\|_{L^2(\xi,\dot{H}^{l+1}_\xi([0,1]))}^2:=
\|\check g\|_{L^2(\xi,L^2([0,1]))}^2+\|(\partial_x +i\xi)^{l+1}\check g(\xi)\|_{L^2(\xi,L^2([0,1]))}^2.
$$
}
are thus bounded by $Ce^{-\eta t}\|g\|_{H^{l+1}(\RM)}$.
The $W^{l,p}(\RM)$ norm of the third term may be bounded similarly as in
the estimation of $s^p$ above, noting that the factor
$(\phi(\xi)-\phi(0))\sim |\xi|$ introduces an additional factor
of $(1+t)^{-1/2}$ decay.
This establishes the result for $m=n=0$; other cases go similarly,
noting that $\partial_t e^{L_\xi t} \tilde \Pi= L_\xi e^{L^\xi} \tilde \Pi$,
with $L_\xi$ a second-order operator, so that we may essentially trade
one $t$-derivative for two $x$-derivatives.

\medskip
{\it (ii) (Proof of \eqref{finalg}(2)).}
The second estimate on $\tilde S$ follows similarly, but substituting
for the estimate of term
$\int_{-\pi}^{\pi}
e^{i\xi x}\alpha(\xi) e^{\lambda(\xi) t}
(\phi(\xi,x)-\phi(0,x))
\langle \tilde \phi(\xi),\check g(\xi) \rangle_{L^2([0,1])}d\xi$
the estimate
$$
\begin{aligned}
\|\int_{-\pi}^{\pi}
e^{i\xi x}\alpha(\xi) e^{\lambda(\xi) t}
(\phi(\xi,x)-\phi(0,x))
\langle \tilde \phi(\xi),\check g(\xi) \rangle d\xi\|_{L^p(x)}
&\lesssim
\|\alpha(\xi) e^{- \eta \xi^2 t}|\xi| |\langle \tilde \phi(\xi),\check g(\xi) \rangle|
\|_{L^q(\xi,L^p(x))}\\
&\lesssim
\| |\xi| e^{-\eta \xi^2 t} \|\check g(\xi) \|_{L^2(x)} \|_{L^q(\xi)}\\
&\le
\| |\xi| e^{-\eta \xi^2 t}\|_{L^{rq}(\xi)}
\|g \|_{L^2(x)}\\
&\lesssim
(1+t)^{-\frac{1}{2}(1/2-1/p)-\frac{1}{2}}\|g\|_{L^2},
\end{aligned}
$$
where $1/r + 1/s=1$ and $qs=2$, so that $rq=\frac{2}{2-q}$ is
$\infty$ for $p=q=2$ and $2$ for $q=1$, $p=\infty$.
\end{proof}

\br\label{BG}
\textup{
In \cite{JZ1}, by contrast, $s^p(t)$ is estimated
through its integral kernel
$$
e(x,t;y)
:=(s^p(t) \delta_y)(x)=
\int_{-\pi}^{\pi} \alpha(\xi) e^{\lambda(\xi)t}e^{i\xi x}
\tilde \phi(\xi,y) d\xi ,
$$
yielding the slightly sharper estimate
$
\frac{\|s^p(t)g\|_{L^p(x)}}{\|g\|_{L^1(x)}}\le
\sup_y \|e(\cdot,t;y)\|_{L^p(x)}\le
\|\tilde \phi\|_{L^\infty(x,\xi)}(1+t)^{-1/2(1-1/p)}.
$
For our purposes, this makes no difference, and, as we shall
see in Section \ref{s:mod}, there can be an advantage
in maintaining the separation into distinct frequencies afforded
by the Bloch representation.
}
\er

\section{Linear behavior for modulational data}\label{s:mod}
Next, we consider behavior of \eqref{fullS}
when applied to modulational data
$g=h_0\bar u'$,
the linearized version of a nonlinear modulational perturbation
$\bar u(x+h_0(x)) -\bar u(x)\sim h_0(x)\bar u'(x)$.
The following estimates, obtained by frequency-domain rather than
spatial-domain (Green function) techniques
as in \cite{JZ1,JZ2,JZN},  
together with the associated
modified nonlinear iteration scheme of Section \ref{s:nonlin}, below,
represent the main new technical contributions of this paper.

\bpr\label{modprop}
Under assumptions (H1)--(H2) and (D1)--(D2),
for all $t>0$, $2\leq p\leq \infty$,
\be\label{Spmod}
\| \partial_x^l\partial_t^m s^p (t)  (h_0\bar u')\|_{L^p (\RM)
}
\lesssim
(1+t)^{-\frac{1}{2}(1-1/p)+\frac{1}{2}-\frac{l+m}{2}}
\|\partial_x h_0\|_{L^1 (\RM)
},
\ee
for
$l+m\ge 1$ or else $l=m=0$ and $p=\infty$,
and, for $0\leq l+2m \leq K+1$,
\be\label{tildeSmod}
\|\partial_x^l \partial_t^m \tilde S (t)(h_0\bar u')\|_{L^p
(\RM)
} \lesssim (1+t)^{-\frac{1}{2}(1-1/p)}
 \|\partial_x  h_0\|_{L^1 (\RM) \cap H^{l+2m+1} (\RM)
},
\ee
and when $t\le 1$
\be\label{spdiff}
\begin{array}{rcl}
\|\partial_x^l \partial_t^m (S^p (t) -\Id) (h_0\bar u')\|_{L^p (\RM) }
&\lesssim&\|\partial_x h_0\|_{L^1 (\RM)\cap H^{l+2m+1}(\RM)},\\
\|\partial_x^l \partial_t^m (s^p (t)(h_0\bar u')- h_0)\|_{L^p (\RM)}
&\lesssim&
\|\partial_x h_0\|_{L^1 (\RM)\cap L^{2}(\RM)}.
\end{array}
\ee
\epr

\begin{proof}
{\it (i) (Proof of \eqref{Spmod}), $l+m\ge 1$.}
We treat the case $l=1$, $m=0$; other cases go similarly.
First, re-express
\ba\label{Shf}
\partial_x (s^p(t) (\bar u' h_0))(x)&=
\int_{-\pi}^{\pi}
i\xi \alpha(\xi) e^{\lambda(\xi)t}e^{i\xi x} \langle \tilde \phi  \bar u', \check h_0 \rangle_{L^2([0,1])}(\xi) d\xi\\
&=
\sum_{j\in \ZM}\int_{-\pi}^{\pi}
i\xi \alpha(\xi) e^{\lambda(\xi)t}e^{i\xi x}
\langle \tilde \phi(\xi,y)  \bar u'(y),e^{i2\pi j y}
\rangle_{L^2([0,1])}\hat h_0(\xi+2j\pi) d\xi\\
&=
\sum_{j\in \ZM}\int_{-\pi}^{\pi}
i\xi \alpha(\xi) e^{\lambda(\xi)t}e^{i\xi x}
\widehat{\tilde\phi(\xi)\bar u'}_{j}^*
\hat h_0(\xi+2j\pi) d\xi\\
&=
\sum_{j\in \ZM}\int_{-\pi}^{\pi}
\frac{ \xi}{\xi+2\pi j} \alpha(\xi) e^{\lambda(\xi)t}e^{i\xi x}
\widehat{\tilde\phi(\xi)\bar u'}_{j}^*
\widehat{ \partial_x h_0}(\xi+2j\pi) d\xi,\\
\ea
where
$\widehat{\tilde\phi(\xi)\bar u'}_{j}$
 denotes the $j$th Fourier coefficient in the
Fourier expansion of periodic function
$\tilde\phi(\xi)  \bar u'$. By \eqref{hy}, we thus get
$
\|\partial_x (s^p (t)(\bar u' h_0))\|_{L^p(\RM)}
\le C(1+t)^{-\frac{1}{2}(1-1/p)} \|\partial_x h_0\|_{L^1(\RM)}
\sum_j \Big| \frac{\widehat{\tilde\phi(\xi)\bar u'}_{j}}{1+|j|} \Big|,
$
yielding the result
together with the Cauchy--Schwarz estimate
$$
\sum_j \Big| \frac{\widehat{\tilde\phi(\xi)\bar u'}_{j}}{1+|j|} \Big|
\le
\sqrt{
\sum_j (1+|j|)^{-2} \sum_j |\widehat{\tilde\phi(\xi)\bar u'}_{j}|^2}
\lesssim \|\tilde \phi(\xi)\bar u'\|_{L^2(\RM)}.
$$

\medskip

{\it (ii) (Proof of \eqref{Spmod}), $l=m=0$.}
This follows by an explicit error function decomposition as in \cite{JZ2}, p. 18,
putting a $\xi$ factor on $h_0\bar u'$ and
a $1/\xi$ factor on $e^{\lambda(\xi)t}$ and proceeding as in the cited
estimate.
More precisely, for the low-frequency part of $\hat h_0$, express
the principal part
$ \int_{-\pi}^{\pi}e^{i\xi x} \frac{e^{(ai\xi-d\xi^2)t}}{\xi} d\xi $ as
$$
\int_{-\infty}^{+\infty}e^{i\xi x} \frac{e^{(ai\xi-d\xi^2)t}}{i\xi} d\xi
={\rm errfn} ((x-at)^2/\sqrt{t})
$$
minus
$ \sqrt{t}\int_{|\xi|\ge \pi}
e^{i\xi x} \frac{e^{(ai\xi-d\xi^2)t}}{\xi t^{1/2}} d\xi $,
estimating the $L^\infty(\RM)$ norm of convolutions with the
former by the Triangle Inequality,
and with the latter by Hausdorff--Young's inequality;
for details, see \cite{JZ2}.\footnote{
This estimate is not needed for the nonlinear iteration, so we
do not emphasize it here.  We note that the resulting bounds
\eqref{andpsi} may be recovered alternatively by the modulation decomposition
of \cite{JNRZ1} without case $l=0$.
}

\medskip

{\it (iii) (Proof of \eqref{tildeSmod}).}
Likewise, this follows by re-expressing
\ba\label{tildeShf}
\tilde S(t) (h_0 \bar u')(x)&=
\int_{-\pi}^{\pi}
e^{i\xi x} (1-\alpha(\xi))(e^{L_\xi t}
\check h_0(\xi)\bar u')(x) d\xi
+ \int_{-\pi}^{\pi} e^{i\xi x} \alpha(\xi)
(e^{L_\xi t} \tilde \Pi(\xi) \check h_0(\xi) \bar u')(x) d\xi\\
&\quad
+ \int_{-\pi}^{\pi}
e^{i\xi x}\alpha(\xi) e^{\lambda(\xi) t} (\phi(\xi,x)-\phi(0,x))
\langle \tilde \phi(\xi), \check h_0(\xi)\bar u'\rangle_{L^2([0,1])} d\xi\\
&=
\sum_{j\in \ZM} \int_{-\pi}^{\pi}
e^{i\xi x}(1-\alpha(\xi))
\frac{( e^{L_\xi t}(\bar u' e^{2i\pi j\cdot}))(x)}{i(\xi+2\pi j)}
\widehat {\partial_x h_0}(\xi+2j\pi) d\xi \\
&\quad+ \sum_{j\in \ZM} \int_{-\pi}^{\pi}
e^{i\xi x}\alpha(\xi)
\frac{(e^{L_\xi t}\tilde \Pi(\xi)(\bar u' e^{2ij\pi \cdot}))(x)}{i(\xi+2\pi j)}
\widehat {\partial_x h_0}(\xi+2j\pi) d\xi \\
&\quad+ \sum_{j\in \ZM} \int_{-\pi}^{\pi}
e^{i\xi x}\alpha(\xi) e^{\lambda(\xi)t}
\frac{ (\phi(\xi,x)-\phi(0,x))\widehat{\tilde \phi \bar u'}_j(\xi)^*}{i(\xi+2\pi j)}
\widehat {\partial_x h_0}(\xi+2j\pi) d\xi \\
\ea
then estimating as before, where we are using
$(1-\alpha(\xi))\lesssim \xi$,
$\tilde \Pi(\xi) \bar u'=O(\xi)$ ,
$\phi(\xi)-\phi(0)=O(\xi)$,
 respectively,
to bound the key terms
$\frac{ (1-\alpha(\xi))}{i(\xi+2\pi j)}$ ,
$\frac{ \tilde \Pi(\xi)(\bar u' e^{2ij\pi \cdot})}{i(\xi+2\pi j)}$ ,
and
$\frac{\phi(\xi)-\phi(0)}{i(\xi+2\pi j)}$
for $j=0$, and are using the Cauchy--Schwarz inequality
$$
\sum_j \left|\frac{\widehat {\partial_x h_0}(\xi+2j\pi)}{\xi+2j\pi j}\right|
\le C\sqrt{ \sum_j \frac{1}{(1+|j|)^2} \sum_{j'}
|\widehat{\partial_x h_0}(\xi+2j'\pi)|^2}
\le C\|\partial_x h_0\|_{L^2(\RM)}
$$
to bound the sum over $j\neq0$. This establishes the case $l=m=0$; other cases go similarly.

\medskip

{\it (iv) (Proof of \eqref{spdiff}).}
Expanding
$
S^p(t)-\Id=(S^p(t)-S^p(0)) -\tilde S(0)
=t\partial_t S^p(s(t)) -\tilde S(0)
$
for some $0<s(t)<t$,
we obtain the first inequality by combining \eqref{Spmod}
and \eqref{tildeSmod}.
Likewise, the second ineqality follows by expanding
$
s^p(t)(h_0 \bar u')-h_0=(s^p(t)-s^p(0))(h_0 \bar u')
+ (s^p(0)(h_0\bar u') - h_0)
$
and applying \eqref{Spmod} together with
$$
\begin{array}{l}
\|s^p(0)(h_0\bar u') - h_0\|_{L^p(\RM)}\\
=
\Big\|\int_{-\pi}^{\pi}e^{i\xi x}(1-\alpha(\xi))
\sum_j \hat h_0(\xi+2j\pi) \widehat {\tilde \phi(\xi)\bar u'}_j^*d\xi
\ +\int_{-\pi}^{\pi}e^{i\xi x}
\sum_{j\ne 0} \hat h_0(\xi+2j\pi)
\Big(\widehat {\tilde \phi(\xi) \bar u'}_j^*-1\Big)d\xi\Big\|_{L^p(\RM)}\\
\le
C \sup_\xi\sum_j
\frac{|\xi||\widehat {\tilde \phi(\xi) \bar u'}_j|}{|\xi +2j\pi|} \|\partial_x h_0\|_{L^1(\RM)}
+ C \sup_\xi\sum_{j\ne 0} \frac{|\widehat{\partial_x h_0}(\xi+2j\pi)|} {|\xi +2j\pi|}
\le C\|\partial_x h_0\|_{L^1(\RM)\cap L^2(\RM)}
\end{array}
$$
This establishes the case $l=m=0$; other cases go similarly.
\end{proof}

\begin{remark}\label{remarks}
\textup{
1. If we split $h_0$ into high-frequency and low-frequency parts,
then the contribution of the high-frequency part decays faster by
factor $(1+t)^{-1/2}$ in
all estimates \eqref{Spmod}--\eqref{spdiff}. 
}

\textup{
2. In the estimate of $s^p$, it is easy to see that the same bounds
hold if $\bar u'$ is replaced by any periodic $f\in H^1_{\rm per}([0,1])$.
However, in the estimate of $\tilde S$, replacing $\bar u'$ by
periodic $f\in H^1_{\rm per}([0,1])$ introduces an exponentially decaying error
in $f-\bar u'$,  reflecting the dynamics of the
problem on a periodic domain.
To extend our results to the sum of a localized
perturbaton and a perturbation asymptotic as $x\to \pm \infty$
to any two fixed periodic waves, not necessarily modulations,
requires only the estimation of this latter error term as
a time-exponentially function from $L^\infty(\RM)\to L^\infty(\RM)$,
a semigroup/Fourier multiplier problem of concrete technical nature.
This would be an interesting direction for further investigation.
}

\textup{
3. The estimate \eqref{spdiff} is the key to handling the
``initial layer problem'' in our later nonlinear iteration,
allowing us to essentially prescribe initial data for $\psi$
as is convenient for the analysis; see Remark \ref{remarks2} below for more information.
}
\end{remark}

\section{Nonlinear perturbation equations}\label{s:pert}
Essentially following \cite{JZ1}, for $\tilde{u}(x,t)$ satisfying
$\ks \tilde{u}_t=\ks^2\tilde{u}_{xx}+f(\tilde{u})+\ks c\tilde{u}_x$
and $\psi(x,t)$ to be determined, set
\begin{equation}
u(x,t)=\tilde{u}(x-\psi(x,t),t)\quad\textrm{and}\quad
v(x,t)=u(x,t)-\bar{u}(x)\label{pertvar}.
\end{equation}

\begin{lemma}[\cite{JZ1}]\label{lem:canest}
The nonlinear residual $v$ defined in \eqref{pertvar} satisfies
\be\label{veq}
\ks\left(\partial_t-L\right)(v+\psi\bar u_x)=\ks\mathcal{N},
\qquad
\ks\mathcal{N}=
\cQ+ \cR_x +(\ks\partial_t+\ks^2\partial_x^2)\cS+\cT,
\ee
where
\be\label{eqn:Q}
\cQ:=f(v+\bar{u})-f(\bar{u})-df(\bar{u})v
\ee
\be\label{eqn:R}
\cR:= -\ks v\psi_t -\ks^2 v\psi_{xx}+ \ks^2(\bar u_x +v_x)\frac{\psi_x^2}{1-\psi_x},
\ee
\be\label{eqn:S}
\cS:= v\psi_x
\ee
and
\be\label{eqn:T}
\cT:=-\left(f(v+\bar{u})-f(\bar{u})\right)\psi_x.
\ee
\end{lemma}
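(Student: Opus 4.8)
The plan is to substitute $u(x,t)=\tilde u(x-\psi(x,t),t)$ into the reaction diffusion equation \eqref{rd} and rewrite the result as an equation for $v=u-\bar u$, organizing the error terms so that the principal parts are linear in $(v,\psi)$ and the remainder $\cN$ is ``quadratic'' in the sense of being a product of at least two factors among $v$, $\psi$, and their derivatives, or the Taylor remainder of $f$. First I would record the chain-rule identities: writing $X=x-\psi(x,t)$, one has $u_x=\tilde u_x(X,t)(1-\psi_x)$ and $u_t=\tilde u_t(X,t)-\tilde u_x(X,t)\psi_t$, and similarly $u_{xx}=\tilde u_{xx}(X,t)(1-\psi_x)^2-\tilde u_x(X,t)\psi_{xx}$. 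Solving the first relation for $\tilde u_x(X,t)$ gives $\tilde u_x(X,t)=u_x/(1-\psi_x)=(\bar u_x+v_x)/(1-\psi_x)$, which is how the singular factor $\psi_x^2/(1-\psi_x)$ in \eqref{eqn:R} will appear; this is a standard device for avoiding loss of derivatives on $\psi$.

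Next I would use that $\tilde u$ solves \eqref{rd}, i.e. $\ks\tilde u_t(X,t)=\ks^2\tilde u_{xx}(X,t)+f(\tilde u(X,t))+\ks c\,\tilde u_x(X,t)$ evaluated at the point $X$, and note $\tilde u(X,t)=u(x,t)=\bar u(x)+v(x,t)$. Substituting the chain-rule expressions, $\ks u_t=\ks\tilde u_t(X,t)-\ks\tilde u_x(X,t)\psi_t$, and collecting: the terms $\ks^2\tilde u_{xx}(X,t)$ and $\ks^2 u_{xx}$ differ by $\ks^2\tilde u_{xx}(X,t)(1-(1-\psi_x)^2)+\ks^2\tilde u_x(X,t)\psi_{xx}$, i.e. by quantities of quadratic type plus the $\psi_{xx}$ term, and likewise the advection terms $\ks c\,\tilde u_x(X,t)$ versus $\ks c\,u_x$ differ by $\ks c\,\tilde u_x(X,t)\psi_x$. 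After replacing $\tilde u_x(X,t)$ by $(\bar u_x+v_x)/(1-\psi_x)$ wherever it is convenient (and keeping it as $u_x$ elsewhere), one arrives at an equation of the form
\be
\ks\big(\partial_t-L\big)v=f(\bar u+v)-f(\bar u)-df(\bar u)v+\big(\text{terms linear in }\psi\text{ times }\ks L\bar u_x\text{-type}\big)+(\text{quadratic remainder}),
\ee
where $\cQ$ in \eqref{eqn:Q} is exactly the Taylor remainder and the remaining bookkeeping collects into $\cR_x$, $(\ks\partial_t+\ks^2\partial_x^2)\cS$, and $\cT$ with $\cR,\cS,\cT$ as in \eqref{eqn:R}--\eqref{eqn:T}. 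The reason the $\psi$-dependent principal part comes out as $-\ks(\partial_t-L)(\psi\bar u_x)$, so that it can be moved to the left-hand side to give \eqref{veq}, is the elementary identity that at linear order a shift $\bar u(x+\psi)-\bar u(x)\approx\psi\bar u_x$ is annihilated by $\partial_t-L$ up to quadratic terms — concretely, $L\bar u_x=0$ since $\bar u_x$ is the zero eigenfunction, and $\partial_t(\psi\bar u_x)=\psi_t\bar u_x$, which is absorbed once one writes the first-order-in-$\psi$ part of the advection/diffusion discrepancy in divergence form; tracking this cancellation is precisely what fixes the specific groupings in \eqref{eqn:R} and \eqref{eqn:S}.

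The main obstacle is the careful bookkeeping needed to verify that \emph{every} first-order-in-$\psi$ contribution that is not already of the schematic form $(\partial_t-L)(\psi\bar u_x)$ has been placed into one of $\cR_x$, $\cS$, $\cT$ in divergence or ``two-derivatives'' form — this is what guarantees later that $\cN$ decays at the improved rate in the nonlinear iteration of Section \ref{s:nonlin}. In particular one must check: (a) the $\psi_{xx}$ term from $u_{xx}$ cancels against a matching term produced when expanding $\ks^2\tilde u_{xx}(X,t)$ via the product rule on $(\bar u_x+v_x)/(1-\psi_x)$, leaving only the genuinely quadratic piece $-\ks^2 v\psi_{xx}$ in \eqref{eqn:R}; (b) the $\psi_t$ terms reorganize so that $\bar u_x\psi_t$ is exactly the contribution cancelled by moving $\psi\bar u_x$ to the left side, leaving $-\ks v\psi_t$ in $\cR$; and (c) no derivative of $\psi$ beyond those displayed in \eqref{eqn:R}--\eqref{eqn:T} survives. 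I would not grind through the full substitution here; having set up the chain-rule identities and the key cancellation $L\bar u_x=0$, the lemma follows by direct (if lengthy) computation, and matches the form already established in \cite{JZ1}.
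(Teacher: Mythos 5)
Your proposal is correct and follows essentially the same route as the paper: chain-rule substitution of $u(x,t)=\tilde u(x-\psi(x,t),t)$ into \eqref{rd}, subtraction of the profile equation, and absorption of the linear-in-$\psi$ terms by moving $\psi\bar u_x$ to the left-hand side using $L\bar u_x=0$ together with the profile equation (equivalently, the identity $\ks L(\psi\bar u_x)=\ks^2(\psi_x\bar u_x)_x-\psi_x f(\bar u)$), with the quadratic leftovers grouped into $\cQ$, $\cR_x$, $(\ks\partial_t+\ks^2\partial_x^2)\cS$, $\cT$. The only difference is cosmetic: the paper multiplies through by $1-\psi_x$ and keeps the diffusion term in divergence form $\ks^2\bigl(\tfrac{u_x}{1-\psi_x}\bigr)_x$, which reduces the bookkeeping you defer to a few lines rather than a lengthy expansion tracking individual $\psi_{xx}$ cancellations.
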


\begin{proof}
(See also \cite{JZ1}.\footnote{
Note that we have here followed a different convention
than in \cite{JZ1}, reversing the sign of $\psi$ in \eqref{pertvar}
in agreement with formal asymptotics of \cite{Se,DSSS}.
The change $\psi\to -\psi$ recovers the formulae of \cite{JZ1}.})
From definition \eqref{pertvar} and the fact that $\tilde u$ satisfies
\eqref{rd}, we obtain
\be
\ks(1-\psi_x)u_t+\ks(-c+\psi_t)u_x=\ks^2\left(\frac{1}{1-\psi_x}u_x\right)_x\ +\ (1-\psi_x)f(u)
\ee
hence, subtracting the profile equation $-k_* c\bar u_x=
k_*^2 \bar u_{xx} + f(\bar u)$ for $\bar u$,
\be
\ks v_t-\ks Lv+\ks\psi_t v_x-\ks\psi_x v_t+\ks\psi_t \bar u_x=\cQ-\psi_x f(u)
+\ks^2\left(\frac{\psi_x}{1-\psi_x}u_x\right)_x
\ee
or
\be\label{int}
\ks(\partial_t-L)v+\ks(\psi\bar u_x)_t=\cQ+\cR_x+(\ks\partial_t+\ks^2\partial^2_x)\cS-\psi_x f(u)+\ks^2\left(\psi_x\bar u_x\right)_x,
\ee
and we finish the proof using $L\bar u_x=0$ and the profile equation for $\bar u$ to obtain
\be
\ks L(\psi \bar u_x) = \ks c \psi_x\bar u_x+\ks^2\psi_x\bar u_{xx}+\ks^2\left(\psi_x\bar u_x\right)_x=-\psi_x f(\bar u)+\ks^2\left(\psi_x\bar u_x\right)_x\ .
\ee
\end{proof}

\section{Nonlinear damping estimate}\label{s:damping}

\begin{proposition}[\cite{JZ1}]\label{damping}
Assuming $(H1)-(H2)$, let $v(\cdot,0)\in H^K(\RM)$ (for $v$ as in \eqref{pertvar}) and suppose that for some $T>0$,
the $H^K(\RM)$ norm of $v(t)$ and $\psi_t(t)$ and the $H^{K+1}(\RM)$ norm of $\psi_x(t)$ remain bounded by a sufficiently small constant for all $0\leq t\leq T$.  Then there
 are positive constant $\theta$ and $C$, independent of $T$,
 such that, for all $0\leq t\leq T$,
\be\label{Ebds}
\|v(t)\|^2_{H^K(\RM)}
\leq C\,e^{-\theta t}
\|v(0)\|_{H^K(\RM)}^2+
C\int_0^t e^{-\theta(t-s)}
\left(\|v(s)\|^2_{L^2(\RM)}+
\|(\psi_t, \psi_x)(s)\|_{H^K(\RM)}^2\right)ds.
\ee
\end{proposition}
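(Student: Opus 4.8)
The plan is to prove \eqref{Ebds} by a nonlinear (Friedrichs‑type) energy estimate for the perturbation equation \eqref{veq} of Lemma~\ref{lem:canest}, exploiting the parabolicity of $L$: by \eqref{lin} its principal part is $\ks\partial_x^2$ with $\ks>0$. Rewriting \eqref{veq} as a forced scalar equation for $v$ alone, $\ks(\partial_t-L)v=\ks\mathcal{N}-\ks(\partial_t-L)(\psi\bar u_x)$, and simplifying the last group using $L\bar u_x=0$ and the profile equation exactly as in the proof of Lemma~\ref{lem:canest}, the right‑hand side becomes the sum of (a) the genuinely nonlinear terms $\cQ$, $\cR_x$, $(\ks\partial_t+\ks^2\partial_x^2)\cS$, $\cT$, each at least quadratic in $(v,\psi_x,\psi_t)$ and hence carrying a small prefactor under the standing smallness hypotheses, and (b) forcing terms linear in $(\psi_t,\psi_x)$ with smooth $1$-periodic coefficients (such as $\psi_t\bar u_x$ and $\partial_x(\psi_x\bar u_x)$), whose $H^K(\RM)$ norms are bounded by $\|(\psi_t,\psi_x)\|_{H^K(\RM)}$.

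The core computation is, for each $0\le k\le K$, to differentiate this $v$-equation $k$ times in $x$, pair in $L^2(\RM)$ with $\partial_x^k v$, integrate by parts, and sum on $k$. The principal part of $\ks L$ yields a dissipation $-\theta_0\sum_{k=0}^K\|\partial_x^{k+1}v\|_{L^2(\RM)}^2$ with $\theta_0>0$, up to lower‑order commutators absorbed below; by the interpolation inequalities $\|\partial_x^j v\|_{L^2}^2\le\eps\|\partial_x^{K+1}v\|_{L^2}^2+C_\eps\|v\|_{L^2}^2$ for $0\le j\le K$, a portion of this dissipation dominates $\theta\|v\|_{H^K(\RM)}^2$ for a fixed $\theta>0$, leaving a reserve of $\|\partial_x^{K+1}v\|_{L^2}^2$-dissipation. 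One then verifies that every remaining term is absorbable. Terms from (a) that still carry $K+1$ derivatives on $v$ after one integration by parts — notably those from $\cR_x$ (which contains $v_x$) and from $\ks^2\partial_x^2\cS=\ks^2\partial_x^2(v\psi_x)$ — appear multiplied by $\|\psi_x\|_{L^\infty(\RM)}$ or $\|v\|_{L^\infty(\RM)}$, hence by smallness disappear into the dissipation reserve; all other contributions from (a) are bounded in $L^2(\RM)$ by Moser‑type product and composition estimates (using $f\in C^K$, $K\ge3$, and $H^1(\RM)\hookrightarrow L^\infty(\RM)$), either by $(\text{small})\cdot\|v\|_{H^K(\RM)}^2$ or by $(\text{small})\cdot\|(\psi_t,\psi_x)\|_{H^K(\RM)}^2$ — here the hypothesis that $\|\psi_x\|_{H^{K+1}(\RM)}$ (one derivative beyond $H^K$) is bounded is exactly what controls the top‑order contributions of the phase entering through the nonlinearity — while the linear forcing terms in (b) are handled by Young's inequality, costing only an arbitrarily small fraction of the coercive $\theta\|v\|_{H^K(\RM)}^2$ against a constant multiple of $\|(\psi_t,\psi_x)\|_{H^K(\RM)}^2$.

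The one term demanding care is $\ks\partial_t\cS=\ks\partial_t(v\psi_x)$: at the top level $k=K$ it produces a contribution proportional to $\frac{d}{dt}\!\int\psi_x(\partial_x^K v)^2\,dx$ that cannot be estimated directly and for which substituting the $v$-equation for $v_t$ would cost two extra derivatives. One absorbs it in the standard way into a modified energy $\mathcal{E}(t):=\|v(t)\|_{H^K(\RM)}^2-\sum_{k=0}^K c_k\int\psi_x(\partial_x^k v)^2\,dx$ for suitable constants $c_k$; since $\|\psi_x\|_{L^\infty(\RM)}$ is small, $\mathcal{E}(t)$ is uniformly equivalent to $\|v(t)\|_{H^K(\RM)}^2$, and the residual terms $\int\psi_{xt}(\partial_x^k v)^2\,dx$ it leaves are bounded by $\|\partial_x\psi_t\|_{L^\infty(\RM)}\|v\|_{H^K(\RM)}^2\lesssim(\text{small})\|v\|_{H^K(\RM)}^2$ (by Sobolev embedding, $K\ge3$) and absorbed. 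Collecting these estimates gives the differential inequality $\frac{d}{dt}\mathcal{E}(t)\le-\theta\,\mathcal{E}(t)+C\big(\|v(t)\|_{L^2(\RM)}^2+\|(\psi_t,\psi_x)(t)\|_{H^K(\RM)}^2\big)$ with $\theta,C>0$ depending only on $f$, $\bar u$, $K$ and the (fixed) smallness threshold — in particular not on $T$ — and integrating it with the factor $e^{\theta t}$ and passing back through $\mathcal{E}\sim\|v\|_{H^K(\RM)}^2$ yields \eqref{Ebds}. The formal integrations by parts are justified as usual by first carrying out the computation for the mollified equation (or by invoking parabolic smoothing to work for $t\ge\t>0$ and letting $\t\downarrow0$). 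The main obstacle — the only genuinely delicate point — is this top‑order bookkeeping: arranging that the worst structures $\partial_x^2(v\psi_x)$ and $\cR_x$ contribute, after a single integration by parts, only $\|\psi_x\|_{L^\infty}$- or $\|v\|_{L^\infty}$-small multiples of $\|\partial_x^{K+1}v\|_{L^2}^2$, so that they never overwhelm the parabolic dissipation.
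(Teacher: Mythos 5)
Your proposal is correct and is essentially the paper's own argument: a Friedrichs-type $H^K$ energy estimate on the perturbation equation of Lemma~\ref{lem:canest}, using parabolic dissipation to absorb the top-order quadratic terms (which carry small factors $\|\psi_x\|_{L^\infty}$, $\|v\|_{L^\infty}$), then Sobolev interpolation to trade $\|\partial_x^{K+1}v\|_{L^2}^2$ for $-\theta\|v\|_{H^K}^2+C\|v\|_{L^2}^2$, and Gronwall. The only difference is cosmetic: the paper keeps the quasilinear coefficient $(1-\psi_x)v_t$ on the left and neutralizes it by testing against $\sum_{j=0}^K(-1)^j\partial_x^{2j}v/(1-\psi_x)$, whereas you treat the equivalent difficulty $\partial_t\cS=\partial_t(v\psi_x)$ on the right via the modified energy $\mathcal{E}(t)$ — two standard, interchangeable devices for the same term.
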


\begin{proof}
Take for writing simplicity $k_*=1$.
Rewriting \eqref{int} as
\ba\label{vperturteq2}
(1-\psi_x) v_t -v_{xx}-cv_x&=
df(\bar u)v+ \cQ
- (\bar u_x+v_x) \psi_t
\\ &\quad
+ ((\bar u_x+v_x)  \psi_x)_x
+ \Big((\bar u_x+v_x) \frac{\psi_x^2}{1-\psi_x}\Big)_x-f(\bar u+v)\psi_x,
\ea
taking the $L^2$ inner product against
$\sum_{j=0}^K \frac{(-1)^{j}\partial_x^{2j}v}{1-\psi_x}$,
integrating by parts, and rearranging, we obtain
\[
\frac{d}{dt}
\|v\|_{H^K(\RM)}^2(t)
 \leq -\tilde{\theta} \|\partial_x^{K+1} v(t)\|_{L^2(\RM)}^2 +
C\left( \|v(t)\|_{H^K(\RM)}^2
+\|(\psi_t, \psi_x)(t)\|_{H^K(\RM)}^2 \right),
\]
for some $\tilde\theta>0$, so long as
$\|(v,\psi_t, \psi_x,\psi_{xx})(t)\|_{H^{K+1}(\RM)}$ remains sufficiently small.
Sobolev interpolation
$
\|g\|_{H^K(\RM)}^2 \leq  \tilde{C}^{-1}\|\partial_x^{K+1} g\|_{L^2(\RM)}^2 + \tilde{C} \| g\|_{L^2(\RM)}^2
$
gives, then, for $\tilde{C}>0$ sufficiently large,
\[
\frac{d}{dt}\|v\|_{H^K(\RM)}^2(t) \leq -\theta \|v(t)\|_{H^K(\RM)}^2 +
C\left( \|v(t)\|_{L^2(\RM)}^2+\|(\psi_t, \psi_x)(t)\|_{H^K(\RM)}^2 \right),
\]
from which \eqref{Ebds} follows by Gronwall's inequality.
See \cite{JZ1} for further details.
\end{proof}

\section{Nonlinear iteration scheme}\label{s:nonlin}

The key idea is, similarly as in the localized case treated in
\cite{JZ1}, starting with
$$
(\partial_t-L)(v+\psi \bar u')=\mathcal{N},
\qquad
v|_{t=0}=d_0,\ \psi|_{t=0}=h_0,
$$
where $ d_0:=\tilde u_0(\cdot-h_0(\cdot))-\bar u \in L^1(\RM)\cap H^K(\RM)$,
$\partial_x h_0\in L^1(\RM)\cap H^K(\RM)$, to
choose $\psi$ to cancel $s^p$ contributions, as
\ba\label{psidef}
\psi(t)&=
s^p(t) (h_0\bar u'+d_0)+
\int_0^t s^p(t-s)\N(s)ds\\
&-(1-\chi(t))\left(s^p(t)(d_0+h_0\bar u')-h_0+\int_0^t s^p(t-s)\N(s)ds\right),
\ea
where $\chi(t)$ is a smooth cutoff that is zero for $t\le 1/2$ and one for $t\ge 1$, leaving the system
\ba\label{closed}
v(t)&=\tilde S(t) (d_0+h_0\bar u') + \int_0^t \tilde S(t-s) \N(s)ds\\
&\quad
+(1-\chi(t))\left(S^p(t)d_0+(S^p(t)-\Id)h_0\bar u'
+\int_0^t S^p(t-s)\N(s)ds\right) .
\ea
We may extract from \eqref{psidef}-\eqref{closed} a closed system in
$(v,\psi_x,\psi_t)$ (and some of their derivatives),
 and then recover $\psi$ through the slaved equation \eqref{psidef}.

\br\label{trick}
\textup{
At first sight, we have accomplished nothing by
introducing a $\psi$-dependent change of variable and choosing $\psi(0)=h_0$,
since we have replaced the nonlocalized perturbation
$\tilde u_0(x)-\bar u(x)$ used in the previous $h_0=0$ setup
of \cite{JZ1},
by a different nonlocalized perturbation $d_0 + h_0 \bar u'$.
However, what we really did was replace the asymptotic
states $\bar u(x+c_\pm)-\bar u(x)$ by their linear approximates
$c_\pm \bar u'(x)$, which removes the key difficulty of higher order
remainders in the Taylor expansion of nonlinear modulations.
}
\er

\br\label{remarks2}
Notice that modulational data $\bar u' h_0$
enters in \eqref{closed} only through operators $\tilde S(t)$ and
$(1-\chi(t))(S^p(t)-\Id)$ for which we have Gaussian decay in $L^p(\R)$
with respect to
\footnote{For simplicity we set $l=m=0$ in this discussion.}
 $\|\partial_x h_0\|_{L^1(\R)\cap H^1(\RM)}$,
hence the error incurred by defining $\psi$ by \eqref{psidef}
instead of the value $ \tilde \psi(t)=s^p(t) (h_0\bar u'+d_0)+ \int_0^t s^p(t-s)\N(s)ds$
exactly canceling $s^p$ terms is harmless to our analysis.
The choice of \eqref{psidef} reflects our need to accomodate the incompatibility
between the initial value $\psi|_{t=0}=h_0$ prescribed by the
spatially-asymptotic behavior of the initial perturbation
and the function $\tilde \psi$ encoding time-asymptotic behavior of the
perturbed solution; that is, it is a device to avoid having to resolve
an initial layer near $t=0$.\footnote{In the case $h_0\equiv 0$,
this essentially reduces to the simpler device used in \cite{JZ1} to treat
the localized case, of substituting $\chi(t)s^p(t)$ for $s^p(t)$.
However, the latter is clearly too crude to treat the present case.}
Whether this initial layer is an artifact of our analysis
or reflects some aspect of short-time behavior is unclear;
as the estimates show, this is below our level of resolution.
\er

\section{Nonlinear stability}\label{s:proof}
With these preparations, the proof of stability now goes essentially as in the localized conservative case treated in \cite{JZ2}, using the new linear modulation bounds to estimate the new linear term coming from data
$h_0\bar u'$ in \eqref{psidef} and \eqref{closed}.
As noted in \cite{JZ1}, from differential equation \eqref{veq} together with
integral equation \eqref{closed}
\eqref{psidef}, we readily obtain
short-time existence and continuity with respect to $t$ of solution
$(v,\psi_t,\psi_x)\in H^{K}(\RM)$ by a standard
contraction-mapping argument treating the
linear $df(\bar u)v$ term of the lefthand side along with $\cQ,\cR,\cS,\cT,\psi \bar u'$
terms of the righthand side as sources in the heat equation.
Associated with this solution define so long as it is finite,
\ba\label{szeta}
\zeta(t)&:=\sup_{0\le s\le t}
 \|(v, \psi_t,\psi_x)(s)\|_{H^K(\RM)}(1+s)^{1/4} .
\ea

\bl\label{sclaim}
For all $t\ge 0$ for which $\zeta(t)$ is
finite and sufficiently small,
some $C>0$,
and $E_0:=\|(d_0,\partial_x h_0)\|_{L^1(\RM)\cap H^K(\RM)}$ sufficiently small,
\be\label{eq:sclaim}
\zeta(t)\le C(E_0+\zeta(t)^2).
\ee
\el

\begin{proof}\footnote{
Compare to the argument of Lemma 4.2, [JZ2], regarding localized perturbations in the conservative case.} 
By \eqref{eqn:Q}--\eqref{eqn:T}
and corresponding bounds on the derivatives together
with definition \eqref{szeta}, and using \eqref{vperturteq2} to bound $v_t$,
\be\label{sNbds}
\| \N(t)\|_{L^1(\RM)\cap H^1(\RM)}
\lesssim \|(v,\psi_t,\psi_x)(t)\|_{H^3(\RM)}^2
\le C\zeta(t)^2 (1+t)^{-\frac{1}{2}},\\
\ee
so long as $\zeta(t)$
remains small.
Applying the bounds
\eqref{finale}(1)--\eqref{finalg}(1)
and \eqref{Spmod}--\eqref{spdiff}
of
Propositions \ref{greenbds} and \ref{modprop}
to system \eqref{psidef}- \eqref{closed}, we obtain for any $2\le p<\infty$
\ba\label{sest}
\|v(t)\|_{L^p(\RM)}& \le
C(1+t)^{-\frac{1}{2}(1-1/p)}E_0
+
C\zeta(t)^2\int_0^{t} (1+t-s)^{-\frac{1}{2}(1-1/p)-\frac{1}{2}}
(1+s)^{-\frac{1}{2}}ds\\
&
\le
 C_p (E_0+\zeta(t)^2) (1+t)^{-\frac{1}{2}(1-1/p)}
\ea
and
\ba\label{sestad}
\|(\psi_t,\psi_x)(t)\|_{W^{K+1,p}(\RM)}& \le
C(1+t)^{-\frac{1}{2}}E_0 +
C\zeta(t)^2\int_0^{t} (1+t-s)^{-\frac{1}{2}(1-1/p)-1/2}
(1+s)^{-\frac{1}{2}}ds \\
&\le
 C_p
(E_0+\zeta(t)^2) (1+t)^{-\frac{1}{2}(1-1/p)}.
\ea
Estimate \eqref{sestad} yields in particular that
$\|(\psi_t,\psi_x)(t)\|_{H^{K+1}(\RM)}$ is small,
verifying the hypotheses of Proposition
\ref{damping}.
From \eqref{Ebds} and \eqref{sest}--\eqref{sestad},
we thus obtain
$$\|v(t)\|_{H^K(\RM)} \le
 C(E_0+\zeta(t)^2) (1+t)^{-\frac{1}{4}}.$$
Combining this with \eqref{sestad}, $p=2$, rearranging, and recalling
definition \eqref{szeta}, we obtain the result.
\end{proof}

\begin{proof}[Proof of Theorem \ref{main}]
By short-time $H^K$ existence theory,
$\|(v,\psi_t,\psi_x)(t)\|_{H^{K}(\RM)}$
is continuous so long as it
remains small, hence $\zeta$ remains
continuous so long as it remains small.
By \eqref{sclaim}, therefore,
it follows by continuous induction that
$\zeta(t) \le 2C E_0$ for $t \ge0$, if $E_0 < 1/ 4C$,
yielding by (\ref{szeta}) the result (\ref{mainest}) for $p=2$.
Applying \eqref{sest}--\eqref{sestad}, we obtain
(\ref{mainest}) for $2\le p\le p_*$ for any $p_*<\infty$,
with uniform constant $C$.
Taking $p_*>4$ and estimating
\be\label{QRST}
\|(\cQ,\cR,\cS,\cT)(t)\|_{L^2(\RM)} \lesssim \|(v,\psi_t,\psi_x,\psi_{xx})(t)\|_{L^4(\RM)}^2\le CE_0(1+t)^{-\frac{3}{4}}
\ee
in place of the weaker \eqref{sNbds}, then applying \eqref{finale}(ii) in place of \eqref{finale}(i),
we obtain\footnote{We bound the $\d_t\cS$ contribution according to $$\int_0^t s^p(t-s)\d_t\cS(s)ds=-\int_0^t \d_t[s^p](t-s)\cS(s)ds+s^p(0)\cS(t)-s^p(t)\cS(0).$$}
\ba\label{sestad2}
\|(\psi_t,\psi_x)(t)\|_{W^{K+1,p}(\RM)}& \le
C(1+t)^{-\frac{1}{2}}E_0 +
C\zeta(t)^2\int_0^{t} (1+t-s)^{-\frac{1}{2}(1/2-1/p)-1/2}
(1+s)^{-\frac{3}{4}}ds \\
&\le C (E_0+\zeta(t)^2) (1+t)^{-\frac{1}{2}(1-1/p)},
\ea
for $2\le p\le \infty$.
Likewise, using \eqref{QRST} together with bound
$$
\| (\cQ,\cT)(t)\|_{H^1(\RM)} +\| \cR(t)\|_{H^2(\RM)}
+\| \cS(t)\|_{H^3(\RM)}
\lesssim \zeta(t)^2 (1+t)^{-\frac{1}{2}}
$$
obtained in the course of proving \eqref{sNbds},
we may use \eqref{finalg}(ii) rather than \eqref{finalg}(i) to get
\ba\label{sest2}
\|v(t)\|_{L^p(\RM)}& \le
C(1+t)^{-\frac{1}{2}(1-1/p)}E_0
+C\zeta(t)^2\int_0^t e^{-\eta (t-s)}(1+s)^{-\frac{1}{2}}ds\\
&+C\zeta(t)^2\int_0^{t} (1+t-s)^{-\frac{1}{2}(1/2-1/p)-\frac{1}{2}}(1+s)^{-\frac{3}{4}}ds\\
&\le C(E_0+\zeta(t)^2) (1+t)^{-\frac{1}{2}(1-1/p)}
\ea
and achieve the proof of \eqref{mainest} for $2\le p\le \infty$.

Estimate \eqref{andpsi} then follows through
\eqref{psidef}
using \eqref{finale}(i),
by
\ba\label{sesta}
\|\psi (t) \|_{L^\infty(\RM)}& \le
C E_0
+
C\zeta(t)^2\int_0^{t} (1+t-s)^{-\frac{1}{2}}
(1+s)^{-\frac{1}{2}}ds
 \le C(E_0+\zeta(t)^2),
\ea
yielding nonlinear stability by the fact that
\be
\tilde u(x-\psi(x,t),t)-\bar u(x-\psi(x,t))= v(x,t)+ \bar u(x)-\bar u(x-\psi(x,t)),
\ee
so that $|\tilde u (t) -\bar u|$ is controlled
by the sum of $|v(t)|$ and
$|\bar u-\bar u(\cdot-\psi(\cdot,t))|\lesssim |\psi(t)|\, \sup|\bar u'|$.
\end{proof}

\medskip

{\bf Acknowledgement.} Thanks to B\"jorn Sandstede for
pointing out the results of \cite{SSSU}.

\end{document}